\newtheorem{thm}{Theorem}
\newtheorem{lemma}{Lemma}
\newtheorem{coro}{Corollary}
\let\paragraph\subsection
\definecolor{yellow1}{rgb}{1,1,0.8}
\title{The Sphere Formula}
\author{Oliver Knill}
\date{January 12, 2023}
\address{Department of Mathematics \\ Harvard University \\ Cambridge, MA, 02138 }
\subjclass{}
\keywords{Dehn-Heegard, Dehn-Sommerville, Gauss-Bonnet, Poincar\'e-Hopf, 
Euler-Poincar\'e, Morse-Sard}
\begin{document}
\maketitle

\begin{abstract}
The sphere formula states that in an arbitrary finite abstract simplicial complex, the sum of the Euler 
characteristic of unit spheres centered at even-dimensional simplices is equal to the sum of the Euler 
characteristic of unit spheres centered at odd-dimensional simplices. It follows that if a geometry has
constant unit sphere Euler characteristic, like a manifold, then either all its unit spheres have zero Euler characteristic
or the space itself has zero Euler characteristic. Especially, odd-dimensional manifolds have zero Euler characteristic,
a fact usually verified either in algebraic topology using  Poincar\'e duality together with Riemann-Hurwitz 
then deriving it from the existence of a Morse function, using that the Morse indices of the function
and its negative add up to zero in odd dimensions.
Gauss Bonnet also shows that odd-dimensional Dehn-Sommerville spaces have zero
Euler characteristic because they have constant zero curvature. 
Zero curvature phenomenons can be understood integral geometrically as index expectation
or as Dehn-Sommerville relations.
\end{abstract}

\section{Energy Formula} 

\paragraph{}
A finite abstract simplicial complex $G$ is a finite set of non-empty sets 
that is closed under the operation of taking finite non-empty subsets. It has a 
{\bf finite topology} in which the set of {\bf stars} $U(x)=\{y, x \subset y\}$ 
forms a topological basis. 
The {\bf cores} $K(x) = \{ y, y \subset x\}$ are closed sets in this topology. 
Similarly as the Zariski topology, it is not-Hausdorff. Indeed, 
incident elements in $G$ can not be separated by open sets. 
But the topology has the desired connectivity properties.
The closure $B(x)$ of a star $U(x)$ is called the {\bf unit ball}.
The topological boundary $S(x)=B(x) \setminus U(x)$ of $B(x)$ is the unit {\bf sphere}
and sometimes called the {\bf link} of $x$. 
Define $w(x)=(-1)^{{\rm dim}(x)}$ and for $A \subset G$, let 
$w(G)=\sum_{x \in A} w(x)$ denote the {\bf Euler characteristic} of $A$. 

\paragraph{}
Let us start with an energy formula which expresses
Euler characteristic as the total potential energy $\sum_{x,y} g(x,y)$ between any pair of simplices.
The potential energy $g(x,y)$ is the Green function
$$   g(x,y) = w(x) w(y) w(U(x) \cap U(y)) \;.  $$ 
This defines a unimodular $n \times n$ matrix if $G$ has $n$ elements 
\cite{Unimodularity,MukherjeeBera2018,CountingMatrix,GreenFunctionsEnergized,EnergizedSimplicialComplexes}.
This matrix is the inverse of the connection Laplacian 
$L(x,y) = 1$ if $x \cap y \neq \emptyset$ and $L(x,y)=0$ else. 
If $\sum_{x,y} g(x,y)$ is the total energy, we had shown that the potential at a point
$V(x)=\sum_y g(x,y)$ can be simplified to $w(x) w(U(x))$. The Euler characteristic of 
the smallest open set containing $x$ is so linked to the potential at $x$ which also can 
be seen as a {\bf curvature}, dual to $w(x)$ at $x$. Let us just directly and independently verify.

\begin{thm}[Energy formula]
$\sum_{x \in G} w(x) w(U(x)) = w(G)$. 
\end{thm}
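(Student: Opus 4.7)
The plan is to expand the left-hand side into a double sum over incident pairs $(x,y)$ with $x \subset y$, interchange the order of summation, and recognize each inner sum as the Euler characteristic of the core $K(y)$ of a single simplex, which a short binomial computation shows is always $1$.

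First I would unfold the definition $w(U(x)) = \sum_{y \in U(x)} w(y) = \sum_{y \supset x} w(y)$, giving
$$\sum_{x \in G} w(x)\,w(U(x)) \;=\; \sum_{(x,y)\,:\,x \subset y} w(x)\,w(y).$$
Since $G$ is finite, I may swap $x$ and $y$ as the outer variable, obtaining
$$\sum_{y \in G} w(y) \sum_{x \subset y} w(x) \;=\; \sum_{y \in G} w(y)\,w(K(y)),$$
because $K(y) = \{z \in G : z \subset y\}$ by definition.

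The key remaining input is that $w(K(y)) = 1$ for every simplex $y \in G$. If $\dim(y)=k$, so $|y|=k+1$, then $K(y)$ consists of all non-empty subsets of $y$, and there are $\binom{k+1}{j+1}$ of them in dimension $j$. Hence
$$w(K(y)) \;=\; \sum_{j=0}^{k} (-1)^j \binom{k+1}{j+1} \;=\; -\bigl[(1-1)^{k+1} - 1\bigr] \;=\; 1$$
by the binomial theorem. Substituting this back collapses the outer sum to $\sum_{y \in G} w(y) = w(G)$, which is exactly the claim.

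I do not anticipate a serious obstacle here. The only points requiring care are that the swap of summation is legitimate (immediate from finiteness of $G$) and that $K(y)$ is interpreted inside $G$ so that $\emptyset$ is excluded; otherwise the core of a simplex would have Euler characteristic $0$ instead of $1$ and the identity would fail. The heart of the argument is the standard fact that the closed combinatorial simplex on $k+1$ vertices is contractible with Euler characteristic $1$.
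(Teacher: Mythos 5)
Your proof is correct and is essentially the paper's own argument in different notation: the paper's reduction by linearity to a point mass $h=\delta_{x_0}$ is exactly your interchange of the double sum over incident pairs, and both hinge on the same key fact that the core $K(y)=\overline{\{y\}}$ has Euler characteristic $1$. The only addition is that you verify $w(K(y))=1$ explicitly via the binomial theorem, which the paper simply asserts.
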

\begin{proof}
Replace the $\pm 1$-valued function $w(x)$ by a more general function $h(x)$ and define
$w_h(G) = \sum_{x} h(x)$ and $g_h(x) = w(x) w_h(U(x))$.
The map $h \to g_h$ is linear and the energy formula is linear so that the formula only needs to 
be shown in the case when $h(x_0)=1$ for $x \neq x_0$. The right hand side
is then $w_h(G) =1$. Now look at the left hand side $\sum_{x \in G} w(x) w(U(x))$. This is a sum
over all $w(x)$ for which $x_0 \in U(x)$ which means $\sum_{x, x \subset x_0} w(x)$
but this is the Euler characteristic of the simplicial complex 
$\{ x \subset x_0 \} = \overline{\{ x_0 \}}$ which is always $1$. So, also the left hand side is $1$. 
\end{proof} 

\section{Sphere Formula}

\paragraph{}
Euler characteristic is a very special particular functional on simplicial complexes. It is a point
in a $(n+1)$-dimensional space of {\bf valuations} $F$, functions that satisfy 
$F(A) + F(B) = F(A \cup B) + F(A \cap B)$ for any subsets of $A$ and $B$.

\begin{lemma}[Local valuation] 
$w(B(x))=w(U(x))+w(S(x))$
\end{lemma}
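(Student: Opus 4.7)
The plan is to derive the identity directly from the set-theoretic definition of the unit sphere. Since the excerpt defines $S(x) = B(x) \setminus U(x)$ with $U(x) \subset B(x)$, the set $B(x)$ is the disjoint union of the simplex sets $U(x)$ and $S(x)$. No topological input is actually needed beyond this bookkeeping; what makes the lemma work is that Euler characteristic has been defined simplex-wise, not only on subcomplexes.

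Concretely, the first step is to record the partition $B(x) = U(x) \sqcup S(x)$. The second step is to recall from the earlier definition that for any subset $A \subset G$ one has $w(A) = \sum_{y \in A} w(y)$, a sum that is trivially additive over disjoint unions of simplex sets. Splitting the sum over $B(x)$ into the pieces over $U(x)$ and over $S(x)$ then gives $w(B(x)) = w(U(x)) + w(S(x))$, which is the claim.

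One can equivalently phrase the proof in the language of the preceding paragraph on valuations: take $A = U(x)$ and $B = S(x)$, so that $A \cup B = B(x)$ and $A \cap B = \emptyset$. The valuation identity $F(A) + F(B) = F(A \cup B) + F(A \cap B)$ applied to $F = w$, together with $w(\emptyset) = 0$, yields the formula. In this light, the lemma is really the observation that $w$ viewed on arbitrary simplex subsets is automatically a valuation, because it is a sum of point masses $w(y)$. There is no genuine obstacle here; the only point worth flagging is to confirm that in the paper's conventions $w$ is applied to arbitrary subsets $A \subset G$ via $\sum_{y \in A} w(y)$ rather than only to subcomplexes, which is exactly what the excerpt states.
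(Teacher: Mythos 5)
Your proof is correct and matches the paper's argument in substance: the paper likewise derives the identity from the valuation formula $w(A)+w(B)=w(A\cup B)+w(A\cap B)$ (which holds for arbitrary subsets because $w$ is a linear combination of the simplex-counting functionals $f_k$), applied to the disjoint decomposition $B(x)=U(x)\sqcup S(x)$. Your more elementary phrasing via direct additivity of the point-mass sum $\sum_{y\in A}w(y)$ over a partition is exactly what the valuation formula reduces to when the intersection is empty, so there is nothing to add.
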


\begin{proof} 
For any subsets $A,B \subset G$, whether open or closed or neither, we have
the {\bf valuation formula} $w(A) + w(B) = w(A \cup B) - w(A \cap B)$ because
each of the basic valuations $f_k(G)$ counting the number of $k$-dimensional 
simplices satisfies the formula and $w$ is a linear combination of such 
basic valuations. 
\end{proof} 

\paragraph{}
The {\bf unit ball} $B(x)$, defined as the closure of the star $U(x) = \{ y, x \subset y\}$
is a closed set which contains $x$ as well as its simplicial complex, the core $\overline{\{x\}} 
= \{ y, y \subset x\}$. 

\begin{lemma}[Unit balls]
$w(B(x))=1$ for all $x$.
\end{lemma}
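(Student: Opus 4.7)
The plan is to exhibit $B(x)$ as a cone with apex any chosen vertex $v \in x$, and to implement that geometric idea algebraically via a parity-flipping involution — essentially a discrete Morse matching with a single unmatched (critical) simplex — which will reduce $w(B(x))$ to $w(\{v\}) = 1$.

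First I would unfold the definition of $B(x)$ concretely. Since $U(x)=\{y:x\subset y\}$ is the smallest open set around $x$ in the star topology, a simplex $z$ lies in the closure $B(x)=\overline{U(x)}$ iff every basic neighborhood of $z$ meets $U(x)$, i.e.\ iff $U(z)\cap U(x)\neq\emptyset$, which is the same as asking for a simplex $y$ containing both $z$ and $x$, hence equivalent to $z\cup x\in G$. So
\[
B(x)=\{z\in G:\,z\cup x\in G\}.
\]
This description makes $B(x)$ closed under non-empty subsets (if $z\cup x\in G$ and $\emptyset\neq z'\subset z$ then $z'\cup x\subset z\cup x\in G$), so it is a simplicial subcomplex of $G$; and it contains $\{v\}$ for every vertex $v$ of $x$ because $\{v\}\cup x=x\in G$.

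Next, pick any vertex $v\in x$ (possible because $x$ is non-empty) and consider the symmetric-difference map $\phi(z)=z\triangle\{v\}$. I would verify in two cases that $\phi$ restricts to a fixed-point-free involution of $B(x)\setminus\{\{v\}\}$. If $v\notin z$, then $\phi(z)=z\cup\{v\}$ is a non-empty subset of the simplex $z\cup x\in G$, hence a simplex, and $\phi(z)\cup x=z\cup x\in G$, so $\phi(z)\in B(x)$. If $v\in z$ and $z\neq\{v\}$, then $\phi(z)=z\setminus\{v\}$ is non-empty, is contained in $z\in G$, and satisfies $\phi(z)\cup x\subset z\cup x\in G$. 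The only simplex whose prospective partner is the (excluded) empty set is $\{v\}$ itself, so $\{v\}$ is the unique unpaired simplex.

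Because $\phi$ changes cardinality by exactly one, each matched pair $\{z,\phi(z)\}$ contributes $w(z)+w(\phi(z))=0$ to the Euler sum, leaving
\[
w(B(x))=\sum_{z\in B(x)} w(z)=w(\{v\})=(-1)^{0}=1.
\]
The only delicate step — and the spot where I expect to have to be careful — is the empty-set bookkeeping that certifies $\{v\}$ as the unique unpaired element; once the characterization $B(x)=\{z:z\cup x\in G\}$ is in hand, every other verification reduces to the fact that $G$ is closed under non-empty subsets.
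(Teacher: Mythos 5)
Your proof is correct, but it takes a genuinely different route from the paper. You first pin down the closure as $B(x)=\{z\in G:\ z\cup x\in G\}$ and then kill the Euler sum with the explicit parity involution $z\mapsto z\,\triangle\,\{v\}$ for a fixed vertex $v\in x$; the case check (that both $z\cup\{v\}$ and $z\setminus\{v\}$ stay in $B(x)$, and that $\{v\}$ is the unique unpaired cell because its would-be partner is the empty set) is complete and sound. The paper instead argues by induction on the number of elements of $B(x)$: it peels off an open star $U(y)$ with $y\neq x$, observes that $B(x)\setminus U(y)$ is again a unit ball in the smaller complex $G\setminus U(y)$, and invokes the local valuation formula to see that the Euler characteristic is unchanged. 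The trade-off: your matching argument is self-contained, needs no induction and no valuation formula, and actually proves more --- it exhibits a perfect pairing of all cells of $B(x)$ except $\{v\}$, i.e.\ a discrete Morse matching witnessing that the unit ball is a cone collapsing to a point. The paper's inductive peeling is sketchier as a computation of $w(B(x))$, but it is deliberately phrased so that the same induction establishes that $B(x)$ is \emph{contractible} in the paper's recursive sense (a fact the paper explicitly reuses in the section on Euler's Gem), which your involution does not directly deliver in that precise inductive formulation.
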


\begin{proof}
Use induction with respect to the number of elements in $B(x)$.
If $B(x)$ has one element, it has $w(B(x))=1$. We can reduce the size of $B(x)$ by 
taking a way an element $y \in B(x)$ different from $x$. The complex $B'(x)=B(x) \setminus U(y)$ is 
now a unit ball $B'(x)$ in a smaller $G \setminus U(y)$. The induction assumption assures
that $w(B'(x))=1$. The local valuation formula gives that $w(B(x)))=w(B'(x)$. 
\end{proof} 

\paragraph{}
We get for all complexes (see \cite{KnillEnergy2020} Corollary 6). 

\begin{thm}[Sphere Formula]  
$\sum_{x \in G} w(x) w(S(x)) = 0$.
\end{thm}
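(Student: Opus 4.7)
The plan is to derive the sphere formula directly by combining the three ingredients already proved in the excerpt: the energy formula, the local valuation identity $w(B(x))=w(U(x))+w(S(x))$, and the fact that $w(B(x))=1$ for every $x$. The key observation is that these together let us rewrite $w(U(x))$ purely in terms of $w(S(x))$, after which the sphere formula drops out of the energy formula by linearity.

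First I would use the unit ball lemma and the local valuation lemma in tandem to obtain the pointwise identity $w(U(x)) = 1 - w(S(x))$ for every simplex $x \in G$. Next I would multiply by $w(x)$ and sum over $x \in G$, which yields
\[
\sum_{x \in G} w(x) w(U(x)) = \sum_{x \in G} w(x) - \sum_{x \in G} w(x) w(S(x)).
\]
The left-hand side equals $w(G)$ by the energy formula, and the first term on the right-hand side is, by definition, also $w(G)$. Cancelling $w(G)$ from both sides leaves $\sum_{x \in G} w(x) w(S(x)) = 0$, which is exactly the sphere formula.

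I do not anticipate a real obstacle here, since all of the work has already been done: the energy formula supplied the global identity, the unit ball lemma provided the normalization $w(B(x)) = 1$, and the local valuation lemma provided the decomposition of the ball into star and sphere. The only thing one has to be careful about is that the valuation identity used in the local valuation lemma applies to arbitrary subsets of $G$ (open, closed, or neither), so that it legitimately decomposes $B(x)$ into the disjoint pieces $U(x)$ and $S(x)$ for each $x$; this was already established. Once that is in hand, the proof is essentially a one-line algebraic manipulation.
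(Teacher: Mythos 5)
Your proof is correct and follows essentially the same route as the paper: both combine the energy formula, the unit ball lemma $w(B(x))=1$, and the local valuation identity $w(B(x))=w(U(x))+w(S(x))$, differing only in whether one first solves for $w(U(x))$ pointwise or subtracts the two summed identities at the end. No gaps.
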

\begin{proof} 
The property $\sum_{x \in G} w(x) w(B(x)) = \sum_{x \in G} w(x) = w(G)$ 
reduces so to the definition of $w$. 
Subtract $\sum_{x \in G} w(x) w(U(x))=w(G)$ from 
the sphere formula $\sum_{x \in G} w(x) w(B(x))=w(G)$ and use 
the local {\bf valuation formula}: $w(B(x))=w(U(x))+w(S(x))$. 
\end{proof} 

\paragraph{}
In analogy to the Green function $g(x,y) = w(x) w(y) w((U(x) \cap U(y))$ whose super trace
is $w(G)$, we could look at the sphere Green matrix $s(x,y) = w(x) w(y) w(S(x) \cap S(y))$. The 
sphere theorem tells that the super trace of $s$ is $0$. In our experiments, we notice
the unexplained fact that the matrix $s$ is always singular. The reader is invited to 
experiment with the code provided below. This is in contrast to the 
Green matrix $g$ which is always {\bf unimodular}, meaning that the determinant is $1$ or $-1$.  

\paragraph{}
Let us mention in this context a discrete analogue of a {\bf theorem of Hadwiger} stating that the 
linear space of valuations has dimension ${\rm dim}(G)+1$ and that the $f$-vectors 
$f_k(G)$ counting the $k$-dimensional simplices form a basis for $k=0, \dots,{\rm dim}(G)$. 
Euler characteristic $w(G)=\sum_{k=0}^{{\rm dim}(G)} (-1)^k f_k(G)$ is the only valuation
which preserves the Barycentric
refinement operation, an operation which induces a linear map $f \to Qf$ on $f$ vectors and
so a map $w \to Q^T w$ with a concrete matrix $Q(x,y) = {\rm Stirling}(x,y) x!$ which has a unique 
eigenvalue $1$. The corresponding eigenvector for $Q^T$ is $(1,-1,1,-1,...)$. 
The dynamics of Brycentric refinement is interesting from a spectral point of view
\cite{KnillBarycentric,KnillBarycentric2}.

\section{Euler's Gem}

\paragraph{}
For $x \in G$, the {\bf index} $i(x)=1-w(S(x))$ tells how the Euler characteristic
changes if the open set $U(x)$ is taken away from $G$. If $H=G \setminus U(x)$, then 
$$ w(G) = w(H)+w(B(x))-w(S(x)) = w(H)+w(U(x)) = w(H) + i(x)  \;.  $$
This means that under reduction $G \to H = G \setminus U(x)$, the Euler characteristic
changes as $w(H) = w(G)-i(x)$. This process leads rather general Poincar\'e-Hopf theorems.
We will mention below a version for simplicial complexes. 

\paragraph{}
Inductively, a complex $G$ is called {\bf contractible} if there exists $x \in G$
such that the both closed sets $S(x)$ as well as $G \setminus U(x)$
are both contractible. To found the induction, the {\bf one point complex} $G=1$ 
is declared to be contractible. We have just seen in the proof of the unit ball 
lemma that every unit ball is contractible.

\paragraph{}
A complex $G$ is called a {\bf $d$-manifold}, if every unit sphere $S(x)$ is a {\bf $(d-1)$ sphere}.
A complex $G$ is called a {\bf $d$-sphere}, if it is a $d$-manifold and $G \setminus U(x)$
is contractible for some $x$. The empty complex is $G=0$ is declared to be the {\bf $(-1)$ sphere}.

\begin{thm}[Euler Gem] 
$w(G) =1+(-1)^d$ for every $d$-sphere $G$.
\end{thm}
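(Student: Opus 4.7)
The plan is to induct on the sphere dimension $d$, using the identity $w(G) = w(G \setminus U(x)) + i(x)$ derived earlier from the unit ball lemma and the local valuation formula. The base case $d = -1$ is immediate since $G$ is empty, giving $w(G) = 0 = 1 + (-1)^{-1}$.

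Before executing the outer induction, I would record an auxiliary lemma: every contractible complex has Euler characteristic $1$. This follows by a secondary induction on $|G|$. The one-point complex has $w = 1$ by direct computation. If $G$ is contractible with witness $x$, then both $S(x)$ and $G \setminus U(x)$ are contractible and have strictly fewer elements than $G$, so the inner hypothesis gives each of them Euler characteristic $1$. Substituting into $w(G) = w(G \setminus U(x)) + 1 - w(S(x))$ yields $w(G) = 1$.

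For the inductive step, let $G$ be a $d$-sphere with $d \geq 0$. By definition of $d$-sphere, there exists $x \in G$ such that $H = G \setminus U(x)$ is contractible, hence $w(H) = 1$ by the auxiliary lemma. Because $G$ is a $d$-manifold, the link $S(x)$ is a $(d-1)$-sphere, so the outer induction hypothesis gives $w(S(x)) = 1 + (-1)^{d-1}$. Combining these inputs,
\[ w(G) = w(H) + i(x) = 1 + \bigl(1 - w(S(x))\bigr) = 1 - (-1)^{d-1} = 1 + (-1)^d, \]
which is the desired formula.

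The main obstacle here is essentially organizational rather than substantive: one must keep the two inductions straight. The auxiliary contractibility lemma is independent of $d$ and runs on cardinality, while the outer induction on $d$ uses only that links of a $d$-sphere are $(d-1)$-spheres together with the existence of a single contractible complement. No topology beyond the valuation formula and the unit ball lemma is actually invoked, so the proof should go through cleanly once the sign arithmetic $1 - (-1)^{d-1} = 1 + (-1)^d$ is in place.
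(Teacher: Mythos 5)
Your proof is correct and follows essentially the same route as the paper: induction on $d$, combining the valuation formula $w(G)=w(G\setminus U(x))+w(B(x))-w(S(x))$ with the unit ball lemma and the contractibility of $G\setminus U(x)$. The only difference is that you explicitly isolate the auxiliary fact that contractible complexes have Euler characteristic $1$ (by a secondary induction on cardinality), which the paper uses implicitly when it writes $1=w(G\setminus U(x))$; this is a welcome clarification rather than a deviation.
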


\begin{proof}
By definition of a sphere, there is 
$x \in G$ such that $G-U(x)$ is contractible. The valuation
formula and $w(B(x))=1$ gives, using induction that
$1 = w(G-U(x))=w(G)+w(B(x))-w(S(x)) = w(G)+1-(1+(-1)^{(d-1)})$
so that $w(G) =1+(-1)^d$. 
\end{proof}

\paragraph{}
It follows: 

\begin{coro}
If all unit spheres $S(x)$ of $G$ have the same Euler characteristic,
then either $w(G)=0$ or $w(S(x))=0$ for all $x$. 
\end{coro}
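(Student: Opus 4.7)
The plan is to derive the corollary as an immediate consequence of the Sphere Formula, which asserts $\sum_{x \in G} w(x) w(S(x)) = 0$. This identity is exactly of the form ``weighted sum of sphere Euler characteristics equals zero,'' so the hypothesis that the weights $w(S(x))$ are all equal should allow one to pull that common value out as a scalar factor.

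Concretely, I would let $c$ denote the common value of $w(S(x))$ over all $x \in G$. Substituting into the Sphere Formula gives
\[
0 \;=\; \sum_{x \in G} w(x) w(S(x)) \;=\; c \sum_{x \in G} w(x) \;=\; c \cdot w(G),
\]
using only the linearity of the sum and the definition $w(G) = \sum_{x \in G} w(x)$. Since this is a product of two real numbers equal to zero, at least one factor must vanish: either $c = 0$, which means $w(S(x)) = 0$ for every $x$, or $w(G) = 0$.

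There is essentially no obstacle here; the entire content is already packaged into the Sphere Formula proved just above, and the corollary is a one-line dichotomy extracted from a product. The only thing worth noting is that the argument requires no regularity assumption on $G$ (such as being a manifold or Dehn-Sommerville space): constancy of $w(S(x))$ alone suffices, which is precisely what makes the corollary a clean abstract consequence rather than a topological statement.
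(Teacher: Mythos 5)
Your proof is correct and is essentially identical to the paper's own argument: both factor the common value $c=w(S(x))$ out of the Sphere Formula to obtain $c\cdot w(G)=0$ and conclude the dichotomy. Nothing further is needed.
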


\begin{proof} 
The reason is that if $w(S(x))=c$, then by the sphere formula, $\sum_x w(x) c = 0$.
This implies that either $c=0$ or then that $\sum_x w(x) = w(G)=0$. 
\end{proof} 

\paragraph{}
A complex $G$ is called {\bf homotopic to 1} if a sequence of contraction or inverse 
extension steps leads from $G$ to $1$. One of the simplest complexes which can 
not be contracted but which is homotopic to $1$ is the {\bf dunce hat}. There is an implementation
of that space with $f$-vector $(17, 52, 36)$. One could also look at more
general spheres, by replacing ``contractible" with ``homotopic to $1$". This would be 
{\bf impractical} however as checking whether a complex $G$ is homotopic to $1$ is difficult, 
even NP complete, while checking whether is contractible can be done in polynomial time. But still, 
also for this more general spaces, the Euler Gem formula holds. One can extend the
class even further by looking at Dehn-Sommerville spaces. 

\paragraph{}
A simplicial complex $G$ can be seen as a special {\bf CW complex} in which  simplices $x=x_k$ are added
along a time-line, starting with the zero-dimensional simplices, attached to
$(-1)$-spheres $0$, where $i(x)=1$, then adding $1$-dimensional simplices, seen
as attaching cells to $0$-spheres with  $i(x)=-1$, then adding $2$-dimensional cells etc.
As $i(x)=w(x)$ in each step, one can interpret $w(G)=\sum_x i(x)$ as a Poincar\'e-Hopf formula. 
If allowing cells to be attached $G_{k+1} = G_k+_{U_k} x_k$ to either contractible parts $S_k$ 
leading to $w(G_{k+1}) = w(G_k) + 1-w(S_k)=w(G-k)$ or by attaching new cells to spheres $S_k$,
in which case $w(G_{k+1})=w(G_k) + 1-w(S_k)=w(G_k) + (1+(-1)^{{{\rm dim}}(S(x_k))}$, 
the Morse function $f(x)=k$ if $x=x_k$ encodes the build-up and $i_f(x)=1-w(S^-_f(x))$
with $S^-_f(x)=\{ y \in S(x), f(y)<f(x)\}$ is the Poincar\'e-Hopf index. The dimension ${\rm dim}(B(x_k)$ is
the {\bf Morse index} and the Poincar\'e-Hopf formula $w(G)=\sum_x i_f(x)$ holds. 

\paragraph{}
For the {\bf cube CW complex} for example, one builds first the {\bf cube simplicial complex} with 8 vertices and 
12 edges, then adds 6 cells along circular complexes of Morse index $2$ and Poincar\'e-Hopf index $i_f(x)=1$.
The Poincar\'e-Hopf formula is now $w(G)=\sum_x i_f(x) = 8-12+6=2$, leading to the Euler formula $w(G)=v-e+f=2$ 
for Platonic solids, a discovery of Descartes. The Alexandroff topology defined by generalizing the notion of stars
$U(x)$ as a basis for open sets still attaches a finite topology to the CW complex. If a new index $x_k$ is added
then for every $x \in U(x_k)$, retroactively, $x_k$ is added to $U(x)$. The data structure of a CW complex now
is given by a set $G$, a basis for a finite topology on $G$ and a function $f:G \to \mathbb{N}$ which tells
how the structure is built-up. The data structure ``simplicial complex" is easier to work with but
is in general more costly as one has to deal with more cells. 
A CW cube only has 6 faces, 12 edges and 8 vertices, while a triangulated cube has 24 faces,
36 edges and 14=8+6 vertices

\section{Zero Characteristic}

\paragraph{}
The Euler-Gem formula and the sphere formula together immediately give the zero
Euler characteristic result: 

\begin{coro}[Corollary]
All odd-dimensional manifolds satisfy $w(G) =0$.
\end{coro}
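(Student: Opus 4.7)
The plan is to combine the Euler Gem theorem with the corollary that follows the sphere formula, exploiting that in a manifold all unit spheres are spheres of the same dimension.

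First I would unpack the definition of a $d$-manifold: for every $x \in G$, the unit sphere $S(x)$ is a $(d-1)$-sphere. By the Euler Gem theorem just proved, every $(d-1)$-sphere has Euler characteristic $1+(-1)^{d-1}$. Therefore $w(S(x))$ takes the same value $c=1+(-1)^{d-1}$ for every $x \in G$. This is exactly the hypothesis required by the preceding corollary, so I am entitled to conclude that either $c=0$ or $w(G)=0$.

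Now I would specialize to the odd-dimensional case. If $d$ is odd then $d-1$ is even and $c = 1 + (-1)^{d-1} = 2$, which is not zero. The dichotomy from the corollary therefore collapses to the single alternative $w(G)=0$, which is the claim.

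No real obstacle arises here; the whole argument is a two-step deduction from machinery already in place. The only thing to be slightly careful about is the edge case $d=1$, where $S(x)$ is a $0$-sphere, i.e.\ two points, with $w(S(x))=2$, consistent with the formula; and the degenerate case where $G$ is empty, for which $w(G)=0$ trivially. Everything else is a direct substitution into the Euler Gem formula and an invocation of the corollary.
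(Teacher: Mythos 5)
Your proposal is correct and is essentially the paper's own argument: the paper likewise uses the Euler Gem formula to get $w(S(x))=2$ for all $x$ and then applies the sphere formula $\sum_x w(x)w(S(x))=0$ directly, which is exactly what the corollary you invoke packages up. The only cosmetic difference is that you cite that corollary while the paper re-runs its one-line proof inline.
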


\begin{proof}
By the Euler-Gem formula, for odd-dimensional
manifolds, every unit sphere has Euler characteristic $2$. The sphere formula
gives from this $0=\sum_x w(x) w(S(x)) = \sum_x w(x) 2 = w(G)$.
\end{proof}

\paragraph{}
As mentioned already, the proof shows that for any complex $G$ for
which the unit spheres $S(x)$ have constant Euler characteristic $c$, we either must have
that this constant $c=0$ or then that $G$ has zero Euler characteristic. We
for example can take a suspension of an arbitrary even-dimensional
manifold of Euler characteristic $2$. A small example in three dimensions is
a suspension of a copy of two projective planes. There are explicit implementations with
$f_G=(15, 42, 28)$. This is not a manifold but all
unit spheres have Euler characteristic $2$. The curvature is constant $0$.
An other 3 dimensional non-manifold case is the suspension of a disjoint union of
a torus and a sphere. The construction of more general classes is done below using
Dehn-Sommerville. 

\paragraph{}
The above corollary is usually proven using {\bf Poincar\'e-duality} which tells
that the {\bf Betti vector} $(b_0,b_1, \dots, b_{{\rm dim}(G)})$ 
is {\bf palindromic} $b_k=b_{{\rm dim}(G)-k}$. 
The {\bf Betti number} $b_k(G)$ is defined as the nullity of the block $L_k$ in the
$n \times n$ Hodge matrix $L = (d+d^*)^2$ of the complex $G$ with $n$ elements. The {\bf exterior derivative}
$d f(x) = \sum_{y \subset x,|x|-|y|=1} {\rm sign}(x|y) f(y)$ is a $n \times n$ matrix too satisfying $d^2=0$
so that $L=d d^* + d^* d$ is block diagonal. Functions restricted to $x$ with ${\rm dim}(x)=k$ are called 
{\bf $k$-forms}.  The matrices $d$ and so $L$ depend on an initial arbitrary orientation of the simplices 
which enters ${\rm sign}(x|y)$ which is defined to be $1$ if the orientation of $y$ matches the 
induced orientation of $x$ and $-1$ else. A complex is {\bf orientable} if one fix an orientation on all 
simplices by fixing the orientation on maximal simplices. 
But one does not need to have an orientable complex to define the derivative $d$. Changing the orientations is an 
orthogonal transformation on all forms and so is just a change of basis in the Hilbert space on which $d$ and $L$ acts. 
The spectrum and so the nullity $b_k(G)$ is not affected. 

\paragraph{}
The Poincar\'e duality only holds if $G$ is {\bf orientable}, meaning that the maximal simplices
can be oriented in a way such that the order is compatible on intersections.
For all simplicial complexes, one has the {\bf Euler-Poincar\'e} formula $\sum_k (-1)^k b_k=w(G)$.
This identity is just linear algebra using the rank-nullity theorem which
in this context is just the {\bf Hodge relation} relating the rank ${\rm ran}(D) = {\rm ran }(d) \oplus {\rm im}(d^*)$
of the {\bf Dirac operator} $D$ with the rank ${\rm ker}(D)$. If $d_k$ maps $k$ forms to $k+1$ forms and
$d_{k+1}^*$ maps $k+1$ forms to $k$ forms, one can see $b_k = {\rm dim}({\rm ker}(d_k/{\rm ran}(d_{k-1})$. If $f_k$
is the dimension of $k$-forms which is the number of $k$-simplices in $G$, this
immediately shows $\sum_k (-1)^k b_k = \sum_k (-1)^k f_k$. One can also derive the Euler-Poincar\'e formula by
looking ${\rm str}(e^{-tL})$ of the {\bf heat kernel} $e^{-t L}$, where ${\rm str}(A)=\sum_{x \in G} w(x) A(x,x)$ is the 
{\bf super trace}. McKean-Singer have pointed out that the non-zero eigenvalues on even and odd forms are the same
so that ${\rm str}(e^{-tL}) = w(G)$ is constant. But since $w(G)$ is an integer and for large $t$, we $e^{-t L}$ is
close to the kernel of $L$, the left hand side is for large $t$ equal to $\sum_k (-1)^k b_k$ and for $t=0$ we have
$\sum_k (-1)^k f_k$. 

\paragraph{}
If $H$ is an odd-dimensional manifold that is not orientable, one can look at the double cover $G$, 
apply Poincar\'e-duality to see $w(G)=0$, then use {\bf Riemann-Hurwitz} relation $w(H)=w(G/A) =w(G)/|A|$
for a group $A=\mathbb{Z}_2$ acting on $H$ without fixed points in $G$. The Riemann-Hurwitz formula is 
more general and also can take into account {\bf ramification points}, points for which the orbits of $A$
are smaller than in general. One can see many complexes as 
{\bf branched covers} $G$ of simpler complexes $H$ which can be seen as $H=G/A$ for a finite group $A$
acting on $G$. In any case, algebraic topology together with some algebraic geometry allows to see
the fact that odd-dimensional manifolds must have Euler characteristic zero. We will mention below an 
other simple classical approach using Morse theory and using that the indices of a Morse function  on
a manifold at a critical point satisfies $i_f(x)=-i_{-f}(x)$, if the dimension of the manifold is odd.

\paragraph{}
Why does the sphere theorem not apply for the cube or dodecahedron as
we have unit spheres at the vertices of constant Euler characteristic $3$? 
The reason is that one has to look at all the unit spheres in $G$ and not just at the 
unit spheres of $0$-dimensional parts of space. For the cube simplicial complex $G$,
we have eight $0$-dimensional points $x \in G$ and twelve $1$-dimensional points $x \in G$.
While $w(S(x))=3$ for the $0$-dimensional $x$ we have $w(S(x)=2$ for the 
$1$-dimensional $x \in G$. Now $8*3=24$ and $12*2=24$. The Euler characteristic is $8-12=-4$
which can be understood as the Euler characteristic of a $2$-sphere with $6$ holes so that $2-6=-4$. 
In the Dodecahedron case it is $2-12=-10$.

\paragraph{}
We will write in full generality the Euler characteristic in the next section as a sum of {\bf curvatures}
located on $0$-dimensional simplices. In the case of a $1$-dimensional complex, the curvature at a
vertex $x$ is $K(x) = 1-|S(x)|/2$. Summing up the curvatures $\sum_x K(x) = |V|-\sum_{x} |S(x)|/2
= |V|-|E|=f_0(G)-f_1(G)$ is just invoking the {\bf Euler-handshake formula} $\sum_v f_0(S(v)) = f_1(G)$
in any one dimensional complex $G$. In the case of the cube or Dodecahedron complex,
the curvature on the vertices is constant $-1/2$, leading to Euler characteristic $8*(-1/2)=-4$ 
or $20*(-1/2)=-10$ respectively.

\section{Gauss-Bonnet}

\paragraph{}
Let $f=(f_0, \dots, f_d)$ denote the {\bf $f$-vector} of $G$, where $f_k(G)$ is the 
number of elements in $G$ of length $k+1$. Define the {\bf simplex generating function}
$$   f_G(t) = 1+\sum_{k=0}^d f_k(G) t^{k+1} $$  
and simply call it the {\bf f-function}. Define 
$F_G(t)=\int_0^t f_G(s) \; ds$. While calculus is involved here, note that $f_G$ is 
a polynomial and that when dealing with polynomials, one stays in a finite setup by
just declaring $\frac{d}{dt} t^n=n t^{n-1}$ and $\int_0^t s^n ds = t^{n+1}/(n+1)$. Seen as
such, writing down derivatives and integrals is {\bf notation}. We do not invoke any limits
even so there calculus as a background theory interprets the expressions using limits. 
For the following, see \cite{dehnsommervillegaussbonnet}. Note that the curvature is located
on the 0-dimensional part of space.

\begin{thm}[Gauss-Bonnet]
$f_G(t)-1 = \sum_{v \in V} F_{S(v)}(t)$.
\end{thm}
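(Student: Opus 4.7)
The plan is to reduce the identity to a single combinatorial counting lemma, a higher-dimensional analogue of the Euler handshake formula already cited in the paper. Specifically, I would start by unpacking both sides as polynomials in $t$ and comparing coefficients.

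First I would rewrite the left-hand side as $f_G(t)-1=\sum_{k=0}^d f_k(G)\,t^{k+1}$. For the right-hand side I would integrate termwise: since $f_{S(v)}(s)=1+\sum_{k=0}^{d-1} f_k(S(v))\,s^{k+1}$, we have
\begin{equation*}
F_{S(v)}(t)=t+\sum_{k=0}^{d-1}\frac{f_k(S(v))}{k+2}\,t^{k+2}.
\end{equation*}
Summing over vertices gives
\begin{equation*}
\sum_{v\in V}F_{S(v)}(t)=f_0(G)\,t+\sum_{k=0}^{d-1}\frac{1}{k+2}\left(\sum_{v\in V}f_k(S(v))\right)t^{k+2}.
\end{equation*}

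The key step, which I would state as a small lemma, is the identity
\begin{equation*}
\sum_{v\in V}f_k(S(v))=(k+2)\,f_{k+1}(G).
\end{equation*}
This is a double-counting argument: the sum on the left counts pairs $(v,x)$ where $v$ is a vertex and $x$ is a $k$-dimensional simplex in the link $S(v)$, which is the same as counting pairs $(v,y)$ where $y$ is a $(k+1)$-dimensional simplex of $G$ containing $v$ as one of its $k+2$ vertices; each such $y$ is counted exactly $k+2$ times. This generalizes the handshake identity $\sum_v f_0(S(v))=2 f_1(G)$ invoked in the previous section.

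Substituting this back collapses the factor $k+2$ in the denominator and yields
\begin{equation*}
\sum_{v\in V}F_{S(v)}(t)=f_0(G)\,t+\sum_{k=0}^{d-1}f_{k+1}(G)\,t^{k+2}=\sum_{j=0}^{d}f_j(G)\,t^{j+1}=f_G(t)-1,
\end{equation*}
which is the desired identity. There is no real obstacle here; the only substantive input is the combinatorial lemma above, and once that is in place the Gauss--Bonnet statement is obtained by matching coefficients. The main thing to be careful about is bookkeeping of the dimension shift between simplices in $G$ and simplices in the link $S(v)$, and the fact that the constant terms ($1$ on the left, and the $|V|\cdot t$ appearing from the constant term of $f_{S(v)}$) match after the shift.
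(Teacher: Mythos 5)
Your proof is correct and is essentially the paper's own argument made explicit: the paper distributes the charge $t^{k+2}$ of each $(k+1)$-simplex equally among its $k+2$ vertices, which is precisely your double-counting identity $\sum_{v\in V} f_k(S(v)) = (k+2)\,f_{k+1}(G)$ read off coefficient by coefficient. Your version is a cleaner, more verifiable write-up of the same idea, with the bookkeeping of the dimension shift and the constant term handled carefully.
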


\begin{proof}
Every $y \in S(x)$ carries a charge $t^{k+1}$, then $f_G(t)$ counts the total charge. 
Every $k$-simplex $y \in S(x)$ defines a $(k+1)$-simplex $z$ in $U(x) \subset G$
carrying the charge $t^{k+2}$. It contains $(k+2)$ vertices.
Distributing this charge equally to these points gives each a charge $t^{k+2}/(k+2)$.
The curvature $F_{S(x)}(t)$ adds up all the charges of $z$. 
\end{proof}


\paragraph{}
For $t=-1$, one gets a more traditional form as one can write Euler characteristic 
as a sum of curvatures 
$$   w(G) = \sum_{v \in V} K(v)  \; , $$
where $K(x)=F_{S(x)}(-1)$ is the {\bf Levitt curvature}, the discrete analogue of
the {\bf Gauss-Bonnet-Chern curvature} in the continuum \cite{Cycon}. 
An explicit formula for $K(v)$ with $f_{-1}=1$ is
$$ K(v) = \sum_{k=-1}^{d} (-1)^k \frac{f_k(S(v))}{k+2} $$
which appeared in \cite{Levitt1992} and be placed into the Gauss-Bonnet context in 
\cite{cherngaussbonnet} (We had not been aware of \cite{Levitt1992} when writing that paper). 
It is quite obvious once one realizes that
Euler characteristic is a total energy of the function $w(x)=(-1)^{{\rm dim}(x)}$ 
and that we can shove this value to the zero dimensional parts by placing $w(x)/({{\rm dim}(x)}+1)$
to each of the ${{\rm dim}(x)}+1$ vertices $v$ in $x$. When looking at the total value on 
a vertex $v$, we are interested in how much has been sent to us.

\paragraph{}
We have interpreted this curvature also as an expectation of Poincar\'e-Hopf indices
$$   K(v) = {\rm E}[ i_f(v)] $$ 
for example by taking the probability measure which assigns constant weights to all 
{\bf colorings} $f$. To formulate Poincar\'e-Hopf within simplicial complexes, one can start 
with a function $f:V \to R$ on vertices , where $R$ is an ordered ring like $\mathbb{Z}$ and 
$V=\{ x \in G, {\rm dim}(x)=0 \}$. Now define the index 
$i_f(v) = \sum_{x, v \in x {\rm is} \; {\rm max} \; on \; x} w(x)$. Because every simplex energy $w(x)$ 
has now must moved to the vertex in $x$ where $f$ was maximal, we have the Poincar\'e-Hopf
result 
$$  \sum_x w(x) = \sum_{v \in V} i_f(v) \; . $$
In the Gauss-Bonnet case, the value $w(x)$ has been distributed equally to each of the ${\rm dim}(x)+1$
vertices in $x$. We can now write $i_f(v) = 1-w(S^-_f(v))$, where $S^-_f(v)$ consists of all
simplices in $S(v)$ for which all $f$ values are smaller than $f(v)$. This set $S^-_f(v)$ is 
closed. 

\section{Dehn-Sommerville}

\paragraph{}
The {\bf $h$-function} $h_G(x) = (x-1)^d f_G(1/(x-1))$ generates coefficients $h_k$ of the form
$$ h_G(x) = h_0 + h_1 x + \cdots + h_d x^d + h_{d+1} x^{d+1} \; . $$
In other words, it is the generating function for the {\bf $h$-vector} 
$$  (h_0,h_1, \dots, h_{d+1}) \; . $$
$G$ is called {\bf Dehn-Sommerville} if this vector is {\bf palindromic}
meaning that $h_i=h_{d+1-i}$ for all $i=0, \dots, d+1$.

\begin{lemma}
$G$ is Dehn-Sommerville if and only if $f_G(t)$ satisfies
$f(t)+ (-1)^d f(-1-t)$, meaning that $g(t) = f(t-1/2)$ is 
either {\bf even} or {\bf odd}.
\end{lemma}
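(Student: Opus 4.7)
The plan is to translate the palindromic condition on the $h$-vector into a single functional equation for $h_G$, pull that equation back through the M\"obius substitution defining $h_G$ from $f_G$, and then center the variable at $-1/2$ so that the resulting symmetry reads as ordinary parity.

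\paragraph{}
First I would observe that for a polynomial of degree at most $d+1$, the palindrome relation $h_i = h_{d+1-i}$ is equivalent to the single identity $h_G(x) = x^{d+1} h_G(1/x)$. Substituting the definition of $h_G$ in terms of $f_G$ into both sides, the left side carries a factor $(x-1)^{d+1}$, while the right side gives $x^{d+1}(1/x-1)^{d+1} = (1-x)^{d+1}$; these differ only by a sign $(-1)^{d+1}$, and after cancelling the common $(1-x)^{d+1}$ the palindromic condition collapses to
$$ (-1)^{d+1} f_G\!\left(\frac{1}{x-1}\right) = f_G\!\left(\frac{x}{1-x}\right). $$

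\paragraph{}
The next step is to change variables $t = 1/(x-1)$, which is a bijection of punctured affine lines. A short computation gives $x/(1-x) = -(1+t)$, so the previous equation becomes
$$ f_G(t) + (-1)^d f_G(-1-t) = 0. $$
Finally, centering via $t = s - 1/2$ and writing $g(s) = f_G(s-1/2)$ converts $f_G(-1-t)$ into $g(-s)$, so the relation collapses to $g(s)+(-1)^d g(-s)=0$. That is exactly evenness of $g$ when $d$ is odd and oddness of $g$ when $d$ is even. Since every step in the chain is reversible, the iff follows.

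\paragraph{}
The only real obstacle is sign bookkeeping across the two reciprocal substitutions; conceptually the statement is content-free, since the M\"obius involution $x \mapsto 1/x$ encoding the palindrome pulls back through $x = 1+1/t$ to the affine involution $t \mapsto -1-t$, which is just reflection about the fixed point $-1/2$, so centering there turns the symmetry into plain parity.
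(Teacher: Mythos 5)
Your argument is correct and lands on the same functional equation, but it executes the key step differently from the paper. The paper argues through the roots: it notes that $h$ being palindromic means the root set of $h$ is invariant under the involution $x \mapsto 1/x$, that the substitution $x = 1+1/t$ transports this involution to $t \mapsto -1-t$, and hence that the roots of $f_G$ are invariant under $t \mapsto -1-t$, concluding $f(-1-t) = \pm f(t)$ with the sign left undetermined. You instead encode palindromicity as the exact identity $h_G(x) = x^{d+1}h_G(1/x)$ and push it through the M\"obius substitution as a polynomial identity. This buys two things. First, it pins down the sign as $(-1)^{d+1}$, yielding precisely $f(t) + (-1)^d f(-1-t) = 0$ and hence the correct parity of $g(s)=f(s-1/2)$ as a function of $d$, which the root argument cannot deliver. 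Second, it sidesteps the subtlety that invariance of the root set under $x \mapsto 1/x$ only characterizes palindromicity up to an overall sign pattern (for instance $x^2-1$ has a reciprocal root set but is anti-palindromic), so your version is the more rigorous one. Note also that you silently normalized the definition to $h_G(x) = (x-1)^{d+1} f_G(1/(x-1))$; with the paper's literal exponent $d$ the expression is not a polynomial of degree $d+1$, so your choice is the one consistent with the stated $h$-vector $(h_0,\dots,h_{d+1})$ and should be regarded as the intended definition.
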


\begin{proof}
$h$ is palindromic if and only if the roots of
$h(t)=1+h_0 t + \cdots + h_d t^{d+1}  = (t-1)^d f(1/(t-1))$ are invariant under the
involution $t \to 1/t$. This is equivalent that the roots of $f$ are invariant
under the involution $t \to -1-t$ and so to the symmetry $f(-1-t)= \pm f(t)$
for the $f$-function. If $G$ is a complex with maximal dimension $d$ and $f_G$
satisfies $f(t)+ (-1)^d f(-1-t)$ then $f(-1) = (-1)^d f(0)$ so that 
$w(G)=1+f(-1) =1+(-1)^d$.
\end{proof}

\paragraph{}
Let $G+H = G \cup H \cup \{ x+y, x \in G, y \in H\}$ be the {\bf join} of $G$ and $H$. 
Since $f_{G+H}(t) = f_G(t) f_H(t)$, we immediately see that
if $G$ and $H$ are Dehn-Sommerville, then the join $G+H$ is Dehn-Sommerville again.
Also Barycentric refinements and connected sums of 
$G$ and $H$ along a sphere $S$ are Dehn-Sommerville.
Also edge refinements of Dehn-Sommerville spaces are 
Dehn-Sommerville. 

\begin{coro}
Odd-dimensional Dehn-Sommerville spaces and especially, odd-dimensional 
manifolds all have zero Euler characteristic. 
\end{coro}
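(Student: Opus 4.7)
The proof naturally splits into two independent parts. For the Dehn-Sommerville clause, the preceding lemma essentially concludes the argument: its verification shows that any DS complex of maximal dimension $d$ satisfies $w(G) = 1 + (-1)^d$, which vanishes when $d$ is odd. For the odd-dimensional manifold clause, the plan is to invoke the earlier Corollary in the excerpt, which was proven via the Sphere Formula together with the Euler-Gem identity and directly states that $w(G)=0$ for any odd-dimensional manifold. The present corollary thus packages two parallel results under one roof, with the DS case contributing the more general statement and the manifold case reappearing as the classical application.

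A tempting unified route would be to show that every odd-dimensional manifold is Dehn-Sommerville, and thereby deduce the manifold case as a literal instance of the DS case. The plan for that would be induction on $d$ using Gauss-Bonnet, $f_G(t) - 1 = \sum_{v\in V} F_{S(v)}(t)$, together with the inductive hypothesis that each link $S(v)$ is a DS $(d-1)$-sphere. Integrating the link-level DS symmetry $f_{S(v)}(s)+(-1)^{d-1} f_{S(v)}(-1-s)=0$ from $0$ to $t$ via the substitution $u=-1-s$ gives a shifted relation $F_{S(v)}(t) + (-1)^d F_{S(v)}(-1-t) = (-1)^d F_{S(v)}(-1)$, and summing over vertices together with the constant $-1$ from Gauss-Bonnet yields a relation of the form $f_G(t) + (-1)^d f_G(-1-t) = 1 + (-1)^d f_G(-1)$.

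The main technical obstacle is exactly this residual constant: the right-hand side vanishes if and only if $f_G(-1)=(-1)^{d+1}$, equivalently $w(G) = 1 + (-1)^d$. For odd $d$ this is precisely the conclusion we want, so the ``manifolds are DS'' approach is circular unless an independent $w(G)=0$ input is injected, which the earlier Corollary conveniently supplies. The honest conclusion is that the DS case and the manifold case are parallel consequences of different earlier results (the lemma via Dehn-Sommerville symmetry and the Sphere Formula via Euler-Gem), and the cleanest proof simply concatenates the two, rather than attempting to force one into the other.
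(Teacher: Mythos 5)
Your proof is correct and matches the paper's (implicit) argument: the corollary is stated without a proof environment, and the intended justification is exactly the two pieces you concatenate --- the final line of the preceding lemma's proof, which yields $w(G)=1+(-1)^d$ for any Dehn--Sommerville complex of maximal dimension $d$, together with the earlier corollary (Sphere Formula plus Euler--Gem) for the manifold clause. Your observation that deriving the manifold case by first proving ``odd-dimensional manifolds are Dehn--Sommerville'' would be circular is also consistent with the paper, whose later corollary establishes the Dehn--Sommerville property only for manifolds already known to satisfy $w(G)=1+(-1)^d$.
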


\paragraph{}
Define $\mathcal{X}_{(-1)}=\{ \}$ and inductively
$\mathcal{X}_d = \{ G \; | \; w(G) = 1+(-1)^d,  
  S(x) \in \mathcal{X}_{d-1}, \forall x \in G \}$. 
If $G,H \in \mathcal{X}$, then $G+H \in \mathcal{X}$
because $S_{G+H}(x) = S(x) + H$ for $x \in G$ and 
$S_{G+H}(x) = G + S(x)$  for $x \in H$. Also the assumption 
on Euler characteristic works as 
$w(G) = 1-f_G(-1) \in \mathbb{Z}_2 = \{ -1,1 \}$
and $w(G+H) = f_G(-1) f_G(-1)$ is still in $\{ -1,1 \}$.

\begin{coro}
Every $d$-manifold of Euler characteristic $1+(-1)^d$ is 
Dehn-Sommerville.  
\end{coro}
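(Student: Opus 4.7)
The plan is to argue by induction on $d$, using Gauss-Bonnet to transfer the Dehn-Sommerville symmetry from vertex links up to $G$ itself. The base case $d=0$ (isolated vertices with empty links) is immediate, since the $h$-vector is trivially palindromic. For the inductive step, let $G$ be a $d$-manifold with $w(G)=1+(-1)^d$. By definition of a $d$-manifold, each vertex link $S(v)$ is a $(d-1)$-sphere, so the Euler Gem theorem gives $w(S(v))=1+(-1)^{d-1}$; since a $(d-1)$-sphere is itself a $(d-1)$-manifold, the inductive hypothesis applies and each $S(v)$ is Dehn-Sommerville. By the preceding lemma, this means $f_{S(v)}(-1-s)=(-1)^d f_{S(v)}(s)$.

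The key computation is to integrate this link symmetry. In $F_{S(v)}(-1-t)=\int_0^{-1-t} f_{S(v)}(s)\,ds$ substitute $s=-1-u$; the Jacobian $ds=-du$ and the change of bounds $[0,-1-t]\to[-1,t]$ combine with the link symmetry to give
\[ F_{S(v)}(-1-t) = (-1)^{d+1}\left[F_{S(v)}(t) - F_{S(v)}(-1)\right]. \]
Multiplying by $(-1)^d$ and adding $F_{S(v)}(t)$, the $t$-dependent parts cancel and leave
\[ F_{S(v)}(t) + (-1)^d F_{S(v)}(-1-t) = F_{S(v)}(-1) = K(v), \]
the Levitt curvature at $v$, independent of $t$.

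Summing this identity over $v$ and applying Gauss-Bonnet, $f_G(t)-1=\sum_v F_{S(v)}(t)$, on both sides produces
\[ f_G(t) + (-1)^d f_G(-1-t) = 1 + (-1)^d + \sum_{v\in V} K(v). \]
A second use of Gauss-Bonnet, at $t=-1$, identifies $\sum_v K(v) = f_G(-1)-1$; since $f_G(-1)=1-w(G)$, the right-hand side collapses to $1+(-1)^d - w(G)$. The Euler-characteristic hypothesis $w(G)=1+(-1)^d$ makes this zero, which is precisely the $f$-symmetry of the Dehn-Sommerville lemma and hence equivalent to palindromicity of the $h$-vector.

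The main obstacle is the bookkeeping of two competing signs. The link has dimension $d-1$, so its Dehn-Sommerville symmetry carries the sign $(-1)^{d-1}$, opposite to the target sign $(-1)^d$ for $G$, and the substitution in the integral contributes an additional sign through $ds=-du$. These must align precisely so that the $t$-dependent parts of $F_{S(v)}(t)+(-1)^d F_{S(v)}(-1-t)$ cancel and the residual is the value $F_{S(v)}(-1)$ at exactly the point where Gauss-Bonnet can absorb the sum into $w(G)$. Once that calibration is done, the Euler characteristic hypothesis closes the induction cleanly.
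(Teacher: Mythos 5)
Your proof is correct and follows the same route as the paper: unit spheres are Dehn-Sommerville, and Gauss-Bonnet transfers that symmetry to $f_G$ by integration. You have merely made explicit the two points the paper leaves implicit, namely the induction giving Dehn-Sommerville for the links and the fact that the constant of integration $\sum_v F_{S(v)}(-1)$ is killed precisely by the hypothesis $w(G)=1+(-1)^d$.
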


\begin{proof}
The reason is that the unit spheres are spheres and so Dehn-Sommerville. The Gauss-Bonnet
formula shows that the $f$-vector of $G$ as an integral of Dehn-Sommerville
expressions satisfies the Dehn-Sommerville relation. 
\end{proof} 

\paragraph{}
Dehn-Sommerville is  remarkable. For example, for any 4-sphere,
the $f$-vector satisfies
$$ -22 f_1+33 f_2-40 f_3+45 f_4=0 \; . $$
For example, for the smallest $4$-sphere
with $f=(f_0,f_1,f_2,f_3,f_4)=(10, 40, 80, 80, 32)$
one has
$$ (10, 40, 80, 80, 32) \cdot (0,-22,33,-40,45)=0  \; . $$

\section{Poincar\'e-Hopf}

\paragraph{}
Again, let $V=\subset G$ be the set of $0$-dimensional sets in $G$. 
It can naturally be identified with $\bigcup_x x$ (even so it is not the same.
The object $\{ \{v\} \}$ is not the same than $\{ v\}$). A function on $V$ is 
{\bf locally injective} $f(v) \neq f(w)$ for every $w \in S(x) \cap V$. 
For such a function, define $S^-_f(v) = \{ x \in S(v), f(v)<f(w)\; \forall w \in V \cap S(v) \}$ 
and $S^+_f(v) = \{ S(v), f(v)>f(w) \; \forall w \in V \cap S(v) \} = S^-_{-f}(v)$. 
The Poincar\'e-Hopf theorem assures that
$$  w(G) = \sum_{w \in V \cap S(v)} i_f(w) = \sum_{w \in V \cap S(v)} i_{-f}(w) \; . $$

\paragraph{}
Following \cite{indexformula}, we have now
$$  w(G) = \sum_{w \in V \cap S(v)} j_f(w)  \; , $$
where $j_f(w) = (i_f(w)  + i_{-f}(w))/2$ is the average of the two indices.
The valuation formula  gives
$$  w(S^-(v)) + w(S^+(v)) = w(S(v)) - w(C(v)) \; , $$
where $C(v)$ are the simplices $x$ in $S(v)$ on which $f-f(v)$ changes sign. This
naturally becomes a sub simplicial complex of the Barycentric refinement of $G$, 
by looking at the elements as vertices in a graph and taking
the Whitney complex. The valuation formula is equivalent to 
$$ w(S(v))-2-j_f(v) = w(C_f(v))  \; . $$
We called $C_f(v)$ the {\bf center manifold} of $f$ at $v$. In classical Morse theory, 
this is the manifold $\{w \in S_r(v), f(w)=f(v) \}$ in a small sphere $S_r(v)$ around a 
critical point of $f$ which typically is a $(d-2)$-manifold or empty by the 
{\bf classical Sard theorem}.

\paragraph{}
For Morse functions $f$, the space $C_f(v)$ is actually a $(d-2)$-manifold or empty. 
For a Morse function on a manifold, one also has the 
{\bf symmetric index} $j_f(v) = i_f(v)+i_{-f}(v)=0$ at every 
critical point and so can deduce the zero Euler characteristic result again. 
The Barycentric refinement $G_1$ is the Whitney complex of the graph in which 
two sets in $G$ are connected if one is contained in the other:

\begin{thm}[Discrete Sard]
If $G$ is a $d$-manifold and $f:G \to R$ is locally injective, then 
for any $c \neq f(V)$, the $\{ f =c \}$ is a discrete $(d-1)$-manifold in $G_1$. 
\end{thm}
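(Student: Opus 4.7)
The plan is to realise $\{f=c\}$ concretely as the induced Whitney subcomplex $L_c \subseteq G_1$ whose vertices are the \emph{straddling} simplices of $G$: those $x \in G$ with $\min_{v\in x}f(v) < c < \max_{v\in x}f(v)$. Because $c \notin f(V)$, every simplex of $G$ is either straddling, entirely below $c$, or entirely above $c$, and the straddling property is preserved under taking super-simplices. The task then reduces to showing that each vertex $x \in L_c$ has a combinatorial $(d-2)$-sphere as its unit sphere $S_{L_c}(x)$ inside $L_c$.

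For $x \in L_c$ of dimension $k$, I would first decompose its unit sphere in $G_1$ as a simplicial join
\[ S_{G_1}(x) \;=\; D(x)\ast U^*(x),\qquad D(x)=\{y\in G: y\subsetneq x\},\qquad U^*(x)=\{z\in G: z\supsetneq x\}, \]
each equipped with the chain-inclusion Whitney structure inherited from $G_1$; this is routine because any chain in $G$ of elements comparable to $x$ splits into a part below $x$ and a part above $x$. A short induction on $k$ using the identification $U^*_G(x) \cong U^*_{S_G(v)}(x \setminus \{v\})$ for any $v \in x$ (mediated by $z \mapsto z \setminus \{v\}$) then shows that $U^*(x)$ is a combinatorial $(d-k-1)$-sphere, using only the defining property that links of vertices in a $d$-manifold are $(d-1)$-spheres.

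Intersecting this join with $L_c$ leaves $U^*(x)$ untouched---any super-simplex of the straddling simplex $x$ still straddles---and restricts $D(x)$ to the subcomplex $A(x)$ of straddling proper subsets of $x$, yielding $S_{L_c}(x) = A(x) \ast U^*(x)$. Writing $x = x^{-}\sqcup x^{+}$ with $a=|x^{-}|$ and $b=|x^{+}|$ (both $\geq 1$ because $x$ straddles, and $a+b=k+1$), the map $y\mapsto(y\cap x^{-},\,y\cap x^{+})$ identifies straddling subsets of $x$ with the product poset of non-empty subsets of $x^{\pm}$. Chains of straddling \emph{proper} subsets are then chains in this product poset with its maximum $(x^{-},x^{+})$ removed, which is precisely the barycentric refinement of the boundary of the prism $\Delta^{a-1}\times\Delta^{b-1}$. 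Since $\partial(\Delta^{a-1}\times\Delta^{b-1})$ is a PL sphere of dimension $(a-1)+(b-1)-1=k-2$, and barycentric refinement preserves combinatorial sphericity, $A(x)$ is a combinatorial $(k-2)$-sphere.

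Assembling the pieces, $S_{L_c}(x) = A(x) \ast U^*(x) = S^{k-2} \ast S^{d-k-1} = S^{d-2}$, so $L_c$ is a $(d-1)$-manifold in $G_1$. The edge cases $k=1$ (where $A(x)=\emptyset=S^{-1}$) and $k=d$ (where $U^*(x)=\emptyset=S^{-1}$) fit via $S^{-1}\ast S^n = S^n$. The step I expect to need the most care is the combinatorial sphericity of $A(x)$: its topological type is manifest, but verifying that the Whitney complex on the product poset minus its maximum satisfies the paper's recursive sphere definition is best handled either by invoking preservation of combinatorial sphericity under barycentric refinement of PL spheres, or by a direct induction on $a+b$ that peels off a vertex of $x^{-}$ or $x^{+}$ and uses the local valuation lemma of Section~2 to glue the smaller pieces.
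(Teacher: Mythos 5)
Your proposal is correct, and its skeleton coincides with the paper's: both rest on the join decomposition of the unit sphere of a simplex $x$ in the Barycentric refinement, $S_{G_1}(x)=S^-(x)\ast S^+(x)$ with $S^-(x)=\{y\subsetneq x\}$ and $S^+(x)=\{y\supsetneq x\}$, followed by an analysis of how the level set meets each factor. Where you differ is in the execution, and in two respects your version is the more careful one. First, you identify correctly which factor survives intact: every superset of a straddling simplex straddles, so $S^+(x)$ is kept whole (a $(d-k-1)$-sphere, via your link identification $U^*_G(x)\cong U^*_{S_G(v)}(x\setminus\{v\})$), while $S^-(x)$ is cut down to the straddling proper subsets $A(x)$. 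The paper's wording has these roles interchanged (``the join of $S^-(x)$ and the level $\{f=c\}$ in $S^+(x)$''), which as literally written already fails for an edge of the boundary of a tetrahedron, where the link in the level set is $S^+(x)$ alone. Second, for the cut factor the paper folds the claim back into the induction --- the level set inside the boundary $(k-1)$-sphere of $x$ is a $(k-2)$-sphere --- which silently requires strengthening the induction hypothesis from ``level sets in $d$-manifolds are $(d-1)$-manifolds'' to ``level sets in $d$-spheres are $(d-1)$-spheres''; you instead identify $A(x)$ explicitly as the Barycentric refinement of $\partial(\Delta^{a-1}\times\Delta^{b-1})$. Your route makes the topology of $A(x)$ manifest and leaves the theorem statement untouched, at the price of the step you yourself flag: verifying that this refinement is a sphere in the paper's recursive sense, which does need either the general fact that Barycentric refinements of combinatorial spheres are combinatorial spheres or your proposed induction on $a+b$. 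Either way the argument closes; just be sure to also record that a join of combinatorial spheres is a combinatorial sphere, which both proofs use and which the paper only treats implicitly in its Dehn--Sommerville discussion.
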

\begin{proof}
This can be shown by induction with respect to dimension. In order to show that 
the unit sphere $S(x)$ is a $(d-1)$ sphere, note that by induction the 
intersection $\{f = c\}$ in every unit sphere (a (d-1) sphere) is a $d-2$
dimensional manifold. But we have more: in general in $G_1$, every unit sphere is
the join of $S^-(x) = \{ y \in S(x), y \subset x \}$ and $S^+(x) = \{ y \in S(x), x \subset y \}$. 
Now, since $f-f(x)$ changes sign on $S^+(x)$ but not on $S^-$
by induction, the level surface in $S^+(x)$ is a sphere of dimension $1$ lower. 
The sphere $S(x) \cap \{ f = c\}$ in the level surface of $G$ is now the 
join of $S^-(x)$ and the level $\{ f = c\}$ in $S^+(x)$ which is a sphere of co-dimension
$1$ in $S^+(x)$. 
\end{proof} 

\paragraph{}
The symmetric index of a locally injective function $f$ is 
$j_f(v) = [2-w(S(v)) - w(C_f(v))]/2$, where $C_f(v)$ is the center manifold
in $S(v)$. (See \cite{indexformula}) 

\paragraph{}
If $G$ is an odd-dimensional d-manifold, then $S(v)$ is an even dimensional sphere
and $C_f (v)$ is a $(d-2)$-dimensional manifold by the {\bf discrete Sard theorem}.
We haven then $w(S(v))-2=0$ because $S(v)$ was an even dimensional sphere.
The symmetric index is 
$$ j_f(v) = w(C(v)) = 0  \; . $$
We see that index expectation by averaging two functions $f,-f$ gives us a 
curvature that is constant $0$. The conclusion is again that $w(G)=0$. 

\paragraph{}
The index formula is also interesting in even dimensions: for a 4-manifold
for example, one can see the Gauss-Bonnet curvature as the expectation of 
$j_f(v) = 1-w(C_f(v))/2$. This allows to see the Euler characteristic of a $4$-manifold
in terms of the expectation of Euler characteristic of 
``random two dimensional center manifolds" $C_f(x)$. 
In the positive curvature case, $C_v(v)$ is connected which from the classification of
manifolds shows that $w(C_f(v)) \leq 2$ so that $j_f(v) \geq 0$, corresponding to 
the observation of Milnor \cite{Chern1966} that the Gauss-Bonnet Chern curvature is 
non-negative in the positive curvature case for $4$-manifolds. 
This argument is no more available in dimensions 6 and higher as the 
curvature can have become negative \cite{Geroch,Klembeck}. 
See \cite{Hopf1932,BergerPanorama,BishopGoldberg}.
The index analysis suggests to look for probability spaces of functions for which 
$j_f(v) \geq 0$ and not being constant $0$. This is equivalent to $w(C_f(v)) \leq 2$.

\section{Some references}

\paragraph{}
The notion of finite abstract simplicial complex is due to 
Dehn and Heegaard \cite{DehnHeegaard,BurdeZieschang,MunkholmMunkholm}. 
The Dehn-Sommerville relations go back to Dehn and Sommerville 
\cite{Sommerville1927,Klee1964,NovikSwartz,MuraiNovik,LuzonMoron,BrentiWelker,Hetyei,Klain2002,BergerLadder}.
Euler characteristic was considered first for Platonic solids and
experimentally first studied by Descartes. Euler gave the gem
formula in the case $d=2$ and for graphs which are planar. \cite{Klee63,lakatos,Gruenbaum2003,Richeson}. 
Euler characteristic has been seen in the context of
invariant valuations \cite{Hadwiger,KlainRota}. It is the only valuation in the 
$({\rm dim}(G)+1)$-dimensional space of valuations which is normalized and invariant under Barycentric refinements.
For finite topological spaces related to simplicial complexes \cite{Alexandroff1937,Stong1965,May2008}.
For McKean-Singer in the continuum \cite{McKeanSinger,Cycon}, for the classical
Gauss-Bonnet theorem in higher dimension \cite{Allendoerfer,AllendoerferWeil,Fenchel,Chern44,Rosenberg,Cycon} and 
in the discrete \cite{Levitt1992}. For Poincar\'e-Hopf, \cite{poincare85,hopf26,Morse29}.
For discrete notions of Morse theory \cite{forman95,forman98,forman2000,Forman2002,Forman1999}
within {\bf discrete Morse theory}. For notions of spheres \cite{I94,I94a,Evako1994}
within {\bf digital topology} \cite{Evako2013} or \cite{BobenkoSuris} for discrete differential geometry.
While topologists call geometric realizations of simplicial complexes ``Polytopes", most of the 
polytop literature considers {\bf convex polytopes} and so geometric realizations of $d$-spheres
\cite{Schlafli,coxeter,gruenbaum,symmetries,Ziegler}.
Discrete notions of homotopy was considered already in the discrete \cite{Whitehead} and Evako. 
A crucial simiplification occured in\cite{CYY}. Probabilistic notions in geometry like 
{\bf integral geometry} go back to \cite{Blaschke,Santalo,Banchoff1967,Banchoff1970,Nicoalescu}.
The join in graph theory was introduced in \cite{Zykov}.
For the history of manifolds, see \cite{HistoryTopology,Scholz}. In the context of Dehn-Sommerville, the 
{\bf arithmetic of graphs} comes in. While manifolds are preserved by disjoint union, they are not preserved
by the join operation. But spheres, and more generally Dehn-Sommerville spaces are join monoids.
For graph multiplication, see \cite{Shannon1956,Sabidussi,ImrichKlavzar,HammackImrichKlavzar}.
The Hopf conjecture has very early on seen in the context of Gauss-Bonnet
\cite{HopfCurvaturaIntegra,Allendoerfer,Fenchel,AllendoerferWeil,Chern44,Chern1990,Cycon}.
The Hopf conjecture \cite{Hopf1932} have reappeared in the sixties \cite{BishopGoldberg} and \cite{Chern1966}
and are listed as problems 8) and 10) in the problem collection \cite{YauSeminar1982}.
For historical remarks on Gauss-Bonnet \cite{Chern1990}, on manifolds 
\cite{Dieudonne1989,HistoryTopology}. 
Discrete notions in curvature have appeared first in \cite{Eberhard1891}.
It was then used in graph coloring contexts like \cite{Heesch}. Variants have appeared in two dimensions
\cite{Gromov87,Presnov1990,Presnov1991,Higuchi, NarayanSaniee,RetiBitayKosztolanyi}. 

\paragraph{}
We have explored the topic in the last couple of years, often in a graph theoretical
frame work which is almost equivalent as every graph comes naturally with a {\bf Whitney simplicial complex}
coming from the vertex sets of complete subgraphs and every simplicial complex $G$ defines a graph in which $G$
are the vertices and two are connected if one is contained in the other.
Many texts treat graphs as one-dimensional simplicial complexes, meaning that the simplicial complex
on the graph is the 1-skeleton complex $V \cup E$. In topological graph theory, one 
studies graphs embedded in surfaces \cite{TuckerGross} and so deals with 2-dimensional CW complexes, where
the connected components in an embedding serve as 2-cells. There are strong links between graphs and
simplicial complexes because there are various ways to get also higher dimensional simplicial complexes 
from a graph. The Whitney is the most natural one as all the connectivity, geometric, differential geometric,
or cohomological properties match what one expects in geometric realizations. 
\cite{FerrarioPiccinini,MunkresAlgebraicTopology} and especially \cite{JonssonSimplicial}. 

\paragraph{}
For the Poincare-Hopf which is related to the energy theorem, see
\cite{poincarehopf,parametrizedpoincarehopf,PoincareHopfVectorFields,MorePoincareHopf}. 
For the energy theme, see 
\cite{CountingMatrix,GreenFunctionsEnergized,EnergizedSimplicialComplexes,KnillEnergy2020}. 
For index expectation, see
\cite{indexexpectation,colorcurvature,indexformula,DiscreteHopf2,ConstantExpectationCurvature}. 
For the Sard theorem, see \cite{KnillSard} which tried to be close to the classical Sard theorem \index{Sard42}.
For the Gauss-Bonnet theorem, see 
\cite{elemente11,cherngaussbonnet,valuation,DehnSommerville,dehnsommervillegaussbonnet}. 
For some discrete work on Hopf type questions \cite{DiscreteHopf,DiscreteHopf2}
and constant curvature \cite{ConstantExpectationCurvature}.
For the Hodge or McKean-Singer theme see
\cite{knillmckeansinger,DiracKnill}
For the theme of finite topologies on graphs or complexes
\cite{KnillTopology,KnillTopology2023}.
For overview attempts, see 
\cite{knillcalculus,AmazingWorld,KnillBaltimore}.
For our own explorations on the arithmetic of graphs including the Zykov-Sabidussi ring and 
its dual, the Shannon ring \cite{ArithmeticGraphs,RemarksArithmeticGraphs,ComplexesGraphsProductsShannonCapacity}.

\section{Code}

\paragraph{}
Here is some code which allows an interested reader to experiment with some
of the notions which appeared. The computer uses the 
Whitney functor to nicely generates random complexes from random graphs. With
the parameters given, the random complexes produced are typically 4-5 dimensional.

\begin{tiny}
\lstset{language=Mathematica} \lstset{frameround=fttt}
\begin{lstlisting}[frame=single]
Cl[A_]:=If[A=={},{},Delete[Union[Sort[Flatten[Map[Subsets,A],1]]],1]];  
Fvector[G_]:=If[Length[G]==0,{},Delete[BinCounts[Map[Length,G]],1]]; 
Ffunction[G_,t_]:=Module[{f=Fvector[G],n},Clear[t]; n=Length[f];
  If[Length[G]==0,1,1+Sum[f[[k]]*t^k,{k,n}]]];
Whitney[s_]:=If[Length[EdgeList[s]]==0,Map[{#}&,VertexList[s]],
  Map[Sort,Sort[Cl[FindClique[s,Infinity,All]]]]];
U[G_,x_]:=Module[{u={}},
  Do[If[SubsetQ[G[[k]],x],u=Append[u,G[[k]]]],{k,Length[G]}];u];
Stars[G_]:=Table[U[G,G[[k]]],{k,Length[G]}];
Spheres[G_]:=Table[u=U[G,G[[k]]];Complement[Cl[u],u],{k,Length[G]}]
w[x_]:=-(-1)^Length[x]; Chi[A_]:=Total[Map[w,A]]; 
g[G_]:=Module[{V=Stars[G],n=Length[G]},Table[w[G[[k]]]*w[G[[l]]]*
  Chi[Intersection[V[[k]],V[[l]]]],{k,n},{l,n}]];
sg[G_]:=Module[{V=Spheres[G],n=Length[G]},Table[w[G[[k]]]*w[G[[l]]]*
  Chi[Intersection[V[[k]],V[[l]]]],{k,n},{l,n}]];
Curvature[G_,t_]:=Module[{h=Ffunction[G,y]},Integrate[h,{y,0,t}]];
Curvatures[G_,t_]:=Module[{S=Spheres[G]},Table[If[Length[G[[k]]]==1,
  Curvature[S[[k]],t],0],{k,Length[S]}]];
Levitt[G_] := -Curvatures[G, t] /. t -> (-1);
Zykov[A_,B_]:=Module[{q=Max[Flatten[A]],Q,G=A},
  Q=Table[B[[k]]+q,{k,Length[B]}];
  Do[G=Append[G,Union[A[[a]],Q[[b]]]],{a,Length[A]},{b,Length[Q]}];
  G=Union[G,Q]; If[A=={},G=B]; If[B=={},G=A]; G];
DehnSommervilleQ[G_]:=Module[{f},Clear[t];f=Ffunction[G,t];
   Simplify[f] === Simplify[(f /. t->-1-t)]];

s=RandomGraph[{20,100}]; G=Whitney[s]; K=Levitt[G]; Q=sg[G]; 
Print["f-vector  ",Fvector[G]]; 
Print["Gauss-Bonnet ", Total[K]==Chi[G]];
H=Whitney[CycleGraph[4]]; sphere3=Zykov[H,H]; sphere5=Zykov[sphere3,H];
Print["3- sphere is flat ",Union[Levitt[sphere3]]=={0}];
Print["5- sphere is flat  ",Union[Levitt[sphere5]]=={0}];
Print["Green-Star matrix is unimodular. Det=  ",Det[g[G]]]; 
Print["nullity of sphere Green matrix:  ",Length[NullSpace[Q]]]; 
Print["sphere formula  ",Sum[w[G[[k]]]*Q[[k,k]],{k,Length[G]}]==0];
Print["3-sphere is Dehn-Sommerville ",DehnSommervilleQ[sphere3]];
Print["5-sphere is Dehn-Sommerville ",DehnSommervilleQ[sphere5]];
Print["disk is not D-S ",Not[DehnSommervilleQ[Whitney[StarGraph[5]]]]];
\end{lstlisting}
\end{tiny}

One open question is the significance of the relative large kernel of the
{\bf Green sphere matrix} $s(x,y) = w(x) w(y) w(S(x) \cap S(y))$ which is a bit
surprising, given that $g(x,y) = w(x) w(y) w(U(x) \cap U(y))$ is always unimodular.
In experiments we tried to correlate the nullity with the f-vector. 

\bibliographystyle{plain}

\end{document}